\documentclass[10pt]{amsart}
\usepackage{amsmath, amsthm, amsfonts, amssymb}
\usepackage{enumerate}
\usepackage[bookmarks=false]{hyperref}

\usepackage{xcolor}

\usepackage{verbatim}

\usepackage{comment}

\newtheorem{thm}{Theorem}[section]

\newtheorem{col}[thm]{Corollary}
\newtheorem{lem}[thm]{Lemma}
\theoremstyle{definition}
\newtheorem{defn}[thm]{Definition}
\newtheorem{defn/lem}[thm]{Definition/Lemma}
\newtheorem{rmk}[thm]{Remark}

\newcommand{\R}{\ensuremath{\mathbb{R}}}

\newcommand{\Z}{\ensuremath{\mathbb{Z}}}
\newcommand{\F}{\ensuremath{\mathbb{F}}}

\newcommand{\twoone}{II$_1$ }
\newcommand{\Emb}{\text{Emb}}
\newcommand{\Ran}{\text{Ran}}
\newcommand{\const}{\text{const}}

\begin{document}

\title{Non-isomorphism of reduced free group $C^\ast$-algebras}

\author[D. Gao]{David Gao}
\address{Department of Mathematical Sciences, UCSD, 9500 Gilman Dr, La Jolla, CA 92092, USA}\email{weg002@ucsd.edu}\urladdr{https://sites.google.com/ucsd.edu/david-gao}

\author[S. Kunnawalkam Elayavalli]{Srivatsav Kunnawalkam Elayavalli}
\address{Department of Mathematics, UMD, Kirwan Hall, Campus Drive, MD 20770, USA}\email{sriva@umd.edu}
\urladdr{https://sites.google.com/view/srivatsavke/home}

\begin{abstract}
    Using a new approach involving embedding spaces in \twoone factors with plenty of freely independent Haar unitaries, we prove that $C^\ast_r(\mathbb{F}_n)\ncong C^\ast_r(\mathbb{F}_m)$ for $n \neq m$. This recovers the seminal result of Pimsner and Voiculescu with a short new proof.
\end{abstract}
\maketitle
\begin{center}
    \emph{For Julia Jing, David Gao's sister and best friend.}
\end{center}
\section{Introduction}

In 1982, Pimsner and Voiculescu \cite{projectionsPicu} proved that $K_1(C^\ast_r(\mathbb{F}_n)) = \mathbb{Z}^n$. As a consequence of this, one has that $C^\ast_r(\mathbb{F}_n) \ncong C^\ast_r(\mathbb{F}_m)$ whenever $n \neq m$, now a cornerstone result in $C^\ast$-algebra theory. However, outside of this (and the work of Cuntz \cite{Cuntz1983} that also accessed $K_1$), to our knowledge no conceptually different proof of this result has since appeared in the literature. In this paper we slightly rectify this situation. Our approach is to instead work in the \emph{embedding space}, i.e the space of all embeddings of a $C^\ast$-algebra $A$ into a \twoone factor $M$, equipped with the pointwise operator norm topology. The novelty in our proof is that working inside \twoone factors automatically grants us lots of flexibility as there are many projections available. A key restriction on the \twoone factor $M$ is that it has the property that for any separable subset $N \subset M$ there is a Haar unitary freely independent from $W^\ast(N)$. This is possible in ultrapowers \cite{popa1995free}, or say in the free group factor $L(\mathbb{F}_I)$
for $I$ uncountable.  We can then avail the 1971 result of Araki, Smith and Smith \cite{ASS71} which computes the fundamental group of the unitary groups of \twoone factors in the operator norm topology. We remark that our proof does not involve or directly compute any K-theory and is therefore conceptually distinct from Pimsner-Voiculescu's or Cuntz's proofs. However, we do not claim that our proof completely avoids any K-theoretic undertones whatsoever, since we are working with paths up to homotopy, albeit in different settings. We also remark that our proof is timely as it is motivated by the recent circle of ideas around free groups, $C^\ast$-free independence in ultrapowers, \twoone factors, and contractibility \cite{RobertSelf, DJcontract, 2025arXiv250310505K, selflessF2}. 

\subsection*{Acknowledgements}
This work was done during the Brin Mathematics Research Center workshop ``Recent Developments in Operator Algebras'' during February 4-6, 2026. We are extremely grateful for the stimulating environment offered by the center. We thank Adrian Ioana, Greg Patchell and Chris Schafhauser for helpful conversations. We also thank Dan Voiculescu and Narutaka Ozawa for a helpful exchange. 

\section{Proof of main result}

    We fix a \twoone factor $M$ satisfying the condition that, given any separable $N \subset M$, there is a Haar unitary $u \in M$ freely independent from $W^\ast(N)$. By repeating the procedure, we see that this implies the a priori stronger condition that, for any separable $N \subset M$, there is a sequence of Haar unitaries $u_n \in M$ freely independent from $W^\ast(N)$ and each other. An example of such an $M$ can be given by $M = L(\F_I)$ for any uncountable index set $I$. Whenever we are regarding a subset of $M$ as a topological space, the topology shall always be the operator-norm topology unless indicated otherwise. Whenever we raise a topological space to some power, e.g. $M^I$, we shall understand the space as being equipped with the product topology. When $n = \infty$, by an index $k$ satisfying $1 \leq k \leq n$, we shall understand it as $k$ ranging over all positive integers, without $\infty$.

\begin{defn}
    For a $C^\ast$-algebra $A$, let $\Emb(A, M)$ be the space of all embeddings $A \to M$, equipped with the pointwise operator-norm convergence topology, i.e., the topology induced by the collection of metrics $d_a(\pi_1, \pi_2) = \|\pi_1(a) - \pi_2(a)\|_\infty$ with $a$ ranging over elements of $A$.
\end{defn}

The following is basic but we still include it for the reader.

\begin{lem}\label{lem: op norm embedding}
    Let $\{a_i\}_{i \in I}$ be a set of generators of $A$. Then the map $\iota: \Emb(A, M) \ni \pi \mapsto (\pi(a_i))_{i \in I} \in M^I$ is a topological embedding.
\end{lem}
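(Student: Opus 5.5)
We need to show three things: $\iota$ is injective, $\iota$ is continuous, and $\iota$ is a homeomorphism onto its image. Continuity is the easiest. A basic neighborhood in the product topology on $M^I$ constrains finitely many coordinates in operator norm. Each coordinate map $\pi\mapsto\pi(a_i)$ is continuous, because the seminorm-type metric $d_{a_i}$ is one of the metrics defining the topology on $\Emb(A,M)$. So $\iota$ is continuous.

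For injectivity, take two embeddings (in particular $\ast$-homomorphisms) $\pi_1,\pi_2$ that agree on every $a_i$. They then agree on the $\ast$-algebra generated by the $a_i$: sums, products and adjoints of polynomial expressions are preserved. That $\ast$-algebra is norm dense in $A$, and $\ast$-homomorphisms are contractive, hence continuous, so $\pi_1=\pi_2$.

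The main step is continuity of the inverse on $\iota(\Emb(A,M))$. Let $\pi_\lambda$ be a net and $\pi$ an embedding with $\pi_\lambda(a_i)\to\pi(a_i)$ in norm for every $i$. We must show $\pi_\lambda(a)\to\pi(a)$ in norm for every $a\in A$.

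First take $a=p(a_{i_1},\dots,a_{i_k})$, a noncommutative $\ast$-polynomial in finitely many generators. Multiplication is jointly norm continuous on bounded sets, and the net $\pi_\lambda(a_i)$ is uniformly bounded by $\|a_i\|$ since $\ast$-homomorphisms are contractive. Adjoint and addition are norm continuous. Hence $\pi_\lambda(a)\to\pi(a)$.

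For general $a$, use an $\varepsilon/3$ argument. Choose such a polynomial element $b$ with $\|a-b\|<\varepsilon/3$. Then
\[\|\pi_\lambda(a)-\pi(a)\|\le \|\pi_\lambda(a-b)\|+\|\pi_\lambda(b)-\pi(b)\|+\|\pi(b-a)\|<\varepsilon\]
eventually, using contractivity once more.

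It is worth noting that neither the uniform bound $\|\pi_\lambda(x)\|\le\|x\|$ nor the fact that the map is not merely a bijection requires injectivity of the embeddings. Only contractivity of $\ast$-homomorphisms is used. The only subtlety I expect is this uniform bound, which is what makes the limit of products behave and the density step work; with it the argument is routine.
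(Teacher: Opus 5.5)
Your proof is correct and follows essentially the same route as the paper. Continuity is immediate from the defining metrics, and for continuity of the inverse you approximate $a$ by a $\ast$-polynomial in the generators and use a three-term triangle inequality relying on contractivity of $\ast$-homomorphisms. Two of your steps are harmless extras: injectivity already follows from the paper's net argument applied to a constant net, and multiplication is jointly norm-continuous even without your uniform bound.
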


\begin{proof}
    $\iota$ is clearly continuous. We need to show it is a topological embedding, i.e., we need to show that if a net $\pi_\lambda \in \Emb(A, M)$ and $\pi \in \Emb(A, M)$ satisfy $\pi_\lambda(a_i) \to \pi(a_i)$, then $\pi_\lambda(a) \to \pi(a)$ for all $a \in A$. Indeed, let $\varepsilon > 0$. Then as $a \in A$ and $\{a_i\}_{i \in I}$ generates $A$, there exists a noncommutative polynomial $P$ s.t. $\|P(\{a_i\}_{i \in I}) - a\|_\infty < \varepsilon$. Since a polynomial is continuous in its variables, we have, for large $\lambda$, $\|P(\{\pi_\lambda(a_i)\}_{i \in I}) - P(\{\pi(a_i)\}_{i \in I})\|_\infty < \varepsilon$. Thus, for large $\lambda$,
    \begin{equation*}
    \begin{split}
        &\|\pi_\lambda(a) - \pi(a)\|_\infty\\
        \leq &\|\pi_\lambda(a - P(\{a_i\}_{i \in I})\|_\infty + \|P(\{\pi_\lambda(a_i)\}_{i \in I}) - P(\{\pi(a_i)\}_{i \in I})\|_\infty + \|\pi(P(\{a_i\}_{i \in I} - a)\|_\infty\\
        < &3\varepsilon.
    \end{split}
    \end{equation*}
    The claim follows.
\end{proof}

\begin{rmk}\label{rmk: SOT embedding}
    We note that, by the same argument, the above map is also a topological embedding if $\Emb(A, M)$ is instead equipped with the pointwise SOT-convergence topology and $M$ is equipped with SOT.
\end{rmk}

Now, fix $A = C_r^\ast(\F_n)$ for some $2 \leq n \leq \infty$. Let the free generators of $\F_n$ be denoted by $\{g_k\}_{1 \leq k \leq n}$. Define the map $\Emb(A, M) \ni \pi \mapsto (\pi(g_k))_{1 \leq k \leq n} \in U(M)^n$, which, by Lemma \ref{lem: op norm embedding}, is a topological embedding. Thus, we shall regard $\Emb(A, M)$ as a subspace of $U(M)^n$ through this embedding.

We need the following lemma:

\begin{lem}\label{lem: all unitaries to free}
    For any separable $K \subset U(M)^n$, there exists a homotopy $\varphi: K \times [0, 1] \to U(M)^n$ s.t.,
    \begin{enumerate}
        \item $\varphi(\cdot, 0)$ is the inclusion map $K \subset U(M)^n$;
        \item $\varphi(K, 1) \subset \Emb(A, M)$;
        \item If $\vec{u} \in K \cap \Emb(A, M)$, then $\varphi(\vec{u}, t) \in \Emb(A, M)$ for all $t \in [0, 1]$.
    \end{enumerate}
\end{lem}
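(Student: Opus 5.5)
The plan is to multiply each coordinate by a free Haar unitary, reached by a norm-continuous path from $1$. Since $K$ is separable, the von Neumann algebra $N = W^\ast(\{u_k : \vec u = (u_k)_k \in K\})$ is generated by a separable set. By the standing assumption on $M$, there is a sequence of Haar unitaries $v_1, v_2, \dots \in M$ that are $\ast$-free from $N$ and from each other.

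For each $k$, write $v_k = \exp(i h_k)$ with $h_k = h_k^\ast \in W^\ast(v_k)$ and $\|h_k\|_\infty \leq \pi$, using Borel functional calculus with a branch of the logarithm. Set $w_k(t) = \exp(i t h_k)$, so $t \mapsto w_k(t)$ is norm continuous, $w_k(0) = 1$ and $w_k(1) = v_k$. Define
\begin{equation*}
\varphi(\vec u, t) = (u_k w_k(t))_{1 \leq k \leq n}.
\end{equation*}
Joint continuity in $(\vec u, t)$ for the product topology on $U(M)^n$ is immediate, since multiplication is norm continuous and only finitely many coordinates are involved in any basic open set. Condition (1) holds because $w_k(0) = 1$.

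For (2) and (3) I will use the identification of $\Emb(A, M)$ inside $U(M)^n$: a tuple $(x_k)$ lies in $\Emb(A,M)$ iff the $x_k$ are $\ast$-free Haar unitaries with respect to the trace. Such a tuple generates a copy of $C^\ast_r(\F_n)$ via the GNS representation of the trace, which is faithful on $C^\ast_r(\F_n)$; conversely, $\tau \circ \pi$ is a trace on $C^\ast_r(\F_n)$, hence the canonical trace by uniqueness of the trace.

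For (3), suppose $\vec u \in K$ is already a free Haar tuple. Then the $2n$ algebras $W^\ast(u_1), \dots, W^\ast(v_1), \dots$ are all mutually free, because the $u$'s are free among themselves and the $v$'s are free from $N$ and from one another. Hence the algebras $W^\ast(u_k, v_k)$ are free for distinct $k$, and $u_k w_k(t)$ lies in the $k$-th of them. Moreover, $u_k w_k(t)$ is Haar, since it is a Haar unitary multiplied by a unitary free from it: for $m \neq 0$, $\tau((u_k w)^m)$ expands as an alternating moment with nonzero powers of $u_k$ and vanishes. So $\varphi(\vec u, t)$ is a free Haar tuple for every $t$.

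The main obstacle is (2): showing that for arbitrary unitaries $u_k \in N$, the family $(u_k v_k)_k$ is $\ast$-free Haar. I will handle this by a direct moment computation. Take a reduced word $x_1^{m_1} \cdots x_r^{m_r}$ with $x_j = u_{k_j} v_{k_j}$, consecutive indices distinct and $m_j \neq 0$. Expanding each power gives a product alternating between elements of $N$ and letters $v_k^{\pm 1}$. Next, center each $N$-element as $b = (b - \tau(b)) + \tau(b)$ and expand, arguing by induction on the number of $v$-letters. Terms in which every $N$-element is centered have trace $0$ by freeness. A term with some $\tau(b)$ merges two adjacent $v$-letters. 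Inside one power $x_j^{\pm m}$ the pattern is $v_k u_k v_k u_k \cdots$ (or $u_k^\ast v_k^\ast \cdots$), so adjacent $v$-letters have the same sign and merge into $v_k^{\pm 2}$, which is still a nonzero power. Between different blocks the indices differ, so no cancellation occurs there. Thus every such term is again an alternating word of the required type with fewer $N$-slots and nonzero $v$-powers, and the induction gives trace $0$. The delicate point is the bookkeeping of these merges; this is the step I would write out carefully, possibly citing the standard fact (e.g.\ from Nica--Speicher) that multiplying by free Haar unitaries produces free Haar families. The Haar property of each single $u_k v_k$ is the special case with one block.
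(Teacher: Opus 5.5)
Your proof is correct and follows essentially the paper's route: deform each coordinate along a logarithmic path to a free Haar unitary, and use uniqueness of the trace to identify $\Emb(A,M)$ with tuples of $\ast$-free Haar unitaries. The only difference is that you multiply on one side ($u_k e^{ith_k}$, using $n$ free Haar unitaries), whereas the paper uses $e^{itA_k}x_ke^{itB_k}$ with $2n$ of them; your moment expansion supplies the freeness check the paper says "easily follows", and it works because the $v$-letters, read in order, spell the original reduced word, so merging consecutive blocks never produces the identity.
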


\begin{proof}
    Since $K$ is separable, there exist Haar unitaries $\{u_k, v_k\}_{1 \leq k \leq n}$ which are freely independent from $W^\ast(K)$ and each other. By taking logarithms, we shall write $u_k = e^{iA_k}$ for some self-adjoint $A_k \in \{u_k\}''$ and $v_k = e^{iB_k}$ for some self-adjoint $B_k \in \{v_k\}''$. Now, define
    \begin{equation*}
        \varphi((x_k)_{1 \leq k \leq n}, t) = (e^{itA_k}x_ke^{itB_k})_{1 \leq k \leq n}.
    \end{equation*}

    It is easy to check that this is continuous and $\varphi(\vec{x}, 0) = \vec{x}$. Since $A$ has unique trace, $(x_k)_{1 \leq k \leq n} \in \Emb(A, M)$ iff $x_k$ are freely independent Haar unitaries. Using this and that $A_k, B_k$ are freely independent from $W^\ast(K)$ as well as each other, it easily follows $\varphi(K, 1) \subset \Emb(A, M)$ and $\varphi(K \cap \Emb(A, M), [0, 1]) \subset \Emb(A, M)$.
\end{proof}

The following is an easy lemma from topology:

\begin{lem}\label{lem: weak equivalence}
    Suppose we have an inclusion of spaces $X \subset Y$. Suppose for any compact subset $K \subset Y$, there exists a homotopy $\varphi_K: K \times [0, 1] \to Y$ s.t.,
    \begin{enumerate}
        \item $\varphi_K(\cdot, 0)$ is the inclusion map $K \subset Y$;
        \item $\varphi_K(K, 1) \subset X$;
        \item If $p \in K \cap X$, then $\varphi_K(p, t) \in K$ for all $t \in [0, 1]$.
    \end{enumerate}
    Then the inclusion map $X \subset Y$ is a weak homotopy equivalence.
\end{lem}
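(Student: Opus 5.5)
The plan is to test the statement exactly as worded before trying to prove it, and I expect the test to show that it fails. Condition (3) only asks that the track $\varphi_K(p,[0,1])$ of a point $p \in K \cap X$ stay inside $K$, not inside $X$. The usual route to a weak equivalence would show $\pi_0$-surjectivity and $\pi_k(Y,X,x_0) = 0$ by compressing a map of pairs $(D^k, S^{k-1}) \to (Y,X)$ into $X$. That compression needs the boundary sphere to remain in $X$ during the homotopy, and under condition (3) as stated it need not. So, rather than attempt that argument, I will look for a counterexample.

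The candidate is $Y = [0,1]$ and $X = \{0,1\}$. The inclusion is not a weak equivalence, since $\pi_0(X)$ has two elements and $\pi_0(Y)$ has one. It remains to check the hypothesis for every compact $K \subset [0,1]$.

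\emph{Case 1: $0$ and $1$ lie in the same connected component of $K$.} That component is then all of $[0,1]$, so $K = [0,1]$. Take $\varphi_K(p,t) = (1-t)p$. Conditions (1) and (2) are clear, since the endpoint is the constant map $0 \in X$. Condition (3) holds because every track lies in $[0,1] = K$.

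\emph{Case 2: otherwise.} Since $K$ is compact, there is a clopen partition $K = K_0 \sqcup K_1$ with $1 \notin K_0$ and $0 \notin K_1$; either piece may fail to contain $0$ or $1$ at all. Define $\varphi_K(p,t) = (1-t)p$ on $K_0$ and $\varphi_K(p,t) = (1-t)p + t$ on $K_1$. This is continuous because the pieces are clopen, it starts at the inclusion, and it ends in $X$. Any point of $K \cap X$ is $0 \in K_0$ or $1 \in K_1$, and both are fixed by the homotopy, so their tracks stay in $K$. This gives condition (3).

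Hence the statement, read literally, is false. The main step of the plan is exactly this case check on compact subsets of $[0,1]$, which is the only place where care is needed. The intended hypothesis is almost certainly ``$\varphi_K(p,t) \in X$'', which is what Lemma \ref{lem: all unitaries to free}(3) supplies. The proof the paper needs should therefore be the relative-homotopy compression argument under that corrected condition. Under it, the homotopy of a map of pairs $(D^k, S^{k-1}) \to (Y,X)$ to a map into $X$ keeps the boundary sphere in $X$ throughout, which gives $\pi_k(Y,X,x_0) = 0$ for all $k \ge 1$ and all basepoints. Together with $\pi_0$-surjectivity, obtained by applying the hypothesis to $K = \{y\}$, this yields the weak equivalence.
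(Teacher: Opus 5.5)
Your counterexample is correct, and it catches a real typo. With $Y=[0,1]$ and $X=\{0,1\}$, every compact $K$ admits a homotopy satisfying (1)--(3) as literally worded, yet $\pi_0(X)\to\pi_0(Y)$ is not injective. Your Case 2 homotopies even keep the tracks of points of $K\cap X$ inside $X$. Only $K=[0,1]$ needs the weak form of (3), and for that $K$ no homotopy satisfying the strong form can exist. The paper's own proof uses (3) in the form $\varphi_K(p,t)\in X$ in two places:
\begin{enumerate}
\item in the injectivity step it asserts $\phi'(p,t)=\varphi_{\Ran(\phi)}(f(p),t)\in X$ for $t<1$;
\item in the surjectivity step it asserts that $\varphi_{\Ran(f)}(f(b),\cdot)$ is a path in $X$.
\end{enumerate}
That is exactly what Lemma \ref{lem: all unitaries to free}(3) supplies. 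So ``$\in K$'' should read ``$\in X$'', and nothing downstream is affected.

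For the corrected statement, your route differs from the paper's. The paper works only with absolute homotopy groups. For injectivity, it first flows $f$ into $X$ along $\varphi_K(f(\cdot),t)$, then pushes the whole null-homotopy in $Y$ into $X$ via the endpoint map $\varphi_K(\cdot,1)$, with $K=\Ran(\phi)$. For surjectivity, it pushes a representative sphere into $X$ and corrects the basepoint along its track. This is short and avoids the exact sequence. However, it leaves implicit that a free null-homotopy gives a based one, and it leaves the change-of-basepoint bookkeeping unstated.

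Your compression argument gives $\pi_k(Y,X,x_0)=0$ for all $k\geq 1$ and all $x_0\in X$. The exact sequence of the pair then yields injectivity and surjectivity in every degree and at every basepoint at once, with $\pi_0$-injectivity coming from $\pi_1(Y,X,x_0)=0$ at the pointed-set end of the sequence.

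One point you should make explicit: $\varphi_K(g(\cdot),t)$ moves the basepoint within $X$, so it is a homotopy of maps of pairs that is not rel $s_0$. You therefore need the standard fact that such a homotopy ending in a map into $X$ still forces $[g]=0$. One way to get this is the straight-line homotopy rel $S^{k-1}$ inside $D^k\times[0,1]$, from the bottom face to the union of the side and top faces.
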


\begin{proof}
    We first show the natural map $\pi_k(X) \to \pi_k(Y)$ is injective. Indeed, assume $[f] \in \pi_k(X)$ is s.t. $f \sim \const$ in $Y$. Let $\phi: S^k \times [0, 1] \to Y$ be a homotopy from $f$ to a constant map. Then consider the homotopy $\phi': S^k \times [0, 2] \to X$,
    \begin{equation*}
        \phi'(p, t) = \begin{cases}
            \varphi_{\Ran(\phi)}(f(p), t) &,\text{ if }t < 1\\
            \varphi_{\Ran(\phi)}(\phi(p, t - 1), 1) &,\text{ if }t \geq 1
        \end{cases}.
    \end{equation*}

    Then $\phi'$ is a homotopy from $f$ to a constant map within $X$, whence $[f] = 0$.

    To show surjectivity, we first note that, for any $p \in Y$, $\varphi_{\{p\}}(p, \cdot)$ is a path from $p$ to a point in $X$. Hence, $\pi_0(X) \to \pi_0(Y)$ is surjective. Therefore, for $k \geq 1$, for $[f] \in \pi_k(Y)$, we may assume $f(b) \in X$ where $b \in S^k$ is the base point. Then $\varphi_{\Ran(f)}(f(\cdot), t)$ is a homotopy from $f$ to a map $S^k \to X$. While the basepoints might be different, we simply note that $\varphi_{\Ran(f)}(f(b), \cdot)$ is a path in $X$ from $f(b)$ to $\varphi_{\Ran(f)}(f(b), 1)$.
\end{proof}

Combining Lemma \ref{lem: all unitaries to free} and Lemma \ref{lem: weak equivalence}, we obtain the following:

\begin{col}\label{col: specific homotopy equiv}
    The inclusion $\Emb(A, M) \subset U(M)^n$ is a weak homotopy equivalence.
\end{col}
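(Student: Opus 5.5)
The plan is to apply Lemma \ref{lem: weak equivalence} with $X = \Emb(A, M)$ and $Y = U(M)^n$, using the homotopies supplied by Lemma \ref{lem: all unitaries to free}. I would proceed in three steps.

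\textbf{Step 1: compact sets are separable.} Every compact subset $K \subset U(M)^n$ must be shown to be separable, so that Lemma \ref{lem: all unitaries to free} applies to it. Since $n \leq \infty$ is countable, $U(M)^n$ with the product of the operator-norm topologies is metrizable, for instance via $d(\vec{x}, \vec{y}) = \sum_k 2^{-k}\|x_k - y_k\|_\infty$. A compact metrizable space is separable, so $K$ has a countable dense subset $D$. The coordinates of the elements of $D$ form a countable subset $N_0 \subset M$. Then $W^\ast(N_0)$ is separable, and it contains every coordinate of every point of $K$, since these are norm limits of elements of $N_0$. Hence the standing hypothesis on $M$ provides the freely independent Haar unitaries used in the proof of Lemma \ref{lem: all unitaries to free}. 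This is exactly what that lemma means by ``separable $K$''.

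\textbf{Step 2: produce the homotopy $\varphi_K$.} For each compact $K$, take $\varphi_K := \varphi$ from Lemma \ref{lem: all unitaries to free}. Conditions (1) and (2) of Lemma \ref{lem: weak equivalence} are then literally conditions (1) and (2) there.

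\textbf{Step 3: reconcile condition (3).} This is the only subtle point. Lemma \ref{lem: weak equivalence} as stated asks that $\varphi_K(p, t) \in K$ for $p \in K \cap X$. Lemma \ref{lem: all unitaries to free} instead gives $\varphi(p, t) \in \Emb(A, M) = X$.

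I would check that the proof of Lemma \ref{lem: weak equivalence} only uses the latter property. In the injectivity argument, $\phi'$ on $[0,1)$ is $\varphi_{\Ran(\phi)}(f(p), t)$ with $f(p) \in X$, and this must lie in $X$. In the surjectivity argument, the basepoint path $\varphi_{\Ran(f)}(f(b), \cdot)$ must lie in $X$. In both places the requirement is that $\varphi_K(p,t)$ stay in $X$, not in $K$. So condition (3) should be read as ``$\varphi_K(p,t) \in X$'', which is precisely what Lemma \ref{lem: all unitaries to free}(3) delivers.

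I would also confirm that the sets fed into $\varphi$ in that proof are compact, so that Step 1 applies to them. These are $\Ran(\phi)$, the image of the compact space $S^k \times [0,1]$, and $\Ran(f)$, the image of $S^k$; both are compact.

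With these checks, Lemma \ref{lem: weak equivalence} applies and yields that the inclusion $\Emb(A, M) \subset U(M)^n$ is a weak homotopy equivalence. I expect the only real care to be needed in Steps 1 and 3; everything else is a direct citation.
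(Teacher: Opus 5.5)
Your proposal is correct and follows the paper's argument, which obtains the corollary by feeding the homotopies of Lemma~\ref{lem: all unitaries to free} into Lemma~\ref{lem: weak equivalence}. Your two extra checks are also right: compact subsets of $U(M)^n$ are separable, so the former lemma applies to them; and condition (3) of Lemma~\ref{lem: weak equivalence} should read $\varphi_K(p,t) \in X$ rather than $\in K$, which is all its proof uses and exactly what Lemma~\ref{lem: all unitaries to free}(3) provides.
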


Recall the following results from \cite{ASS71}:

\begin{thm}\label{thm: description of fund gp}
    \,
    \begin{enumerate}
        \item (Theorem 2.8 of \cite{ASS71}) We say a loop in $U(M)$ based at $x \in U(M)$ is a \emph{simple loop} if it is of the form $[0, 1] \ni t \mapsto xe^{2\pi itS} \in U(M)$ for some self-adjoint $S \in M$ with $\sigma(S) \subset \Z$. Then every loop based at $x$ is homotopy equivalent to a concatenation of simple loops. Moreover, $\pi_1(U(M)) \ni [t \mapsto xe^{2\pi itS}] \mapsto \tau(S) \in \R$ extends to an isomorphism between $\pi_1(U(M))$ and (the additive group) of $\R$.
        \item (Theorem 3.2 and the proof of Theorem 3.3 of \cite{ASS71}) There exists a universal constant $\delta \in (0, 1)$ satisfying the following: Given a loop $f: [0, 1] \to U(M)$, whenever $0 = t_0 < t_1 < \cdots < t_n = 1$ is a division of $[0, 1]$ for which $\|f(s) - f(t)\|_\infty < \delta$ as long as $s, t \in [t_{j - 1}, t_j]$ for some $1 \leq j \leq n$, we may define $I_\delta(f) = \sum_{j = 1}^n \tau(\log(f(t_{j - 1})^\ast f(t_j)))$ (where $\log$ is the principal branch of logarithm). Then $I_\delta(f)$ is independent of the choice of the division and is a complete homotopy invariant. In fact, $\pi_1(U(M)) \ni [f] \mapsto \frac{1}{2\pi i}I_\delta(f) \in \R$ is the same isomorphism as the isomorphism in part (1).
    \end{enumerate}
\end{thm}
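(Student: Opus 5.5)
This statement is cited from \cite{ASS71} (Theorem 2.8, Theorem 3.2 and the proof of Theorem 3.3 there), so within the paper it is taken as given. If one wanted to reconstruct it, the plan would be to build the homotopy invariant first, as in part (2), and then use it to identify $\pi_1(U(M))$ as in part (1).

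\textbf{Step 1: choose $\delta$ and show $I_\delta$ is well defined.} Take $\delta<2$, for instance $\delta=1$. Then whenever $\|u-1\|_\infty<\delta$, the spectrum of $u$ avoids $-1$, so the principal $\log(u)$ is norm-continuous in $u$. For $\|u-1\|,\|v-1\|,\|uv-1\|$ small, one would like the additivity
\begin{equation*}
\tau(\log(uv))=\tau(\log u)+\tau(\log v).
\end{equation*}
This is not literally a Baker--Campbell--Hausdorff identity. The route I would try is to consider $\Delta(s)=\tau(\log(u e^{s\log v}))$ and differentiate in $s$. The trace kills the commutator terms, because $\tau(w^{-1}\,dw)$ is the derivative of $\tau\log w$ along the path, which gives the Fuglede--Kadison type identity. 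With additivity in hand, refining a division leaves the sum unchanged, so $I_\delta(f)$ does not depend on the division.

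\textbf{Step 2: homotopy invariance.} A homotopy $H:[0,1]^2\to U(M)$ of loops is uniformly continuous. Subdivide the square finely so that each small square has image of diameter less than $\delta/3$. By additivity, the sum of $\tau\log$ around the boundary of each small square vanishes, and telescoping over all the squares gives invariance. So $\frac{1}{2\pi i}I_\delta$ is a homomorphism $\pi_1(U(M))\to\R$.

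\textbf{Step 3: values on simple loops and surjectivity.} For a simple loop $t\mapsto xe^{2\pi i tS}$, the increments are $e^{2\pi i(t_j-t_{j-1})S}$, with logarithm $2\pi i(t_j-t_{j-1})S$. This gives $I_\delta=2\pi i\,\tau(S)$. In a II$_1$ factor, $\tau(S)$ with $\sigma(S)\subset\Z$ ranges over every value in $\R$, using projections of arbitrary trace. Hence the homomorphism is surjective.

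\textbf{Step 4: every loop is a concatenation of simple loops, and injectivity (the main obstacle).} First, any loop is homotopic to a finite concatenation of short paths $t\mapsto u_{j-1}e^{tH_j}$, where $H_j=\log(u_{j-1}^*u_j)$. The remaining difficulty is that $\exp$ on self-adjoints of a prescribed "total winding" must be controlled.

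The plan here is to write the closed polygon as $xe^{i\sum H_j}$-like data, or more concretely to note that $U(M)$ is path connected. The fibration picture I would aim for is this: loops with $I_\delta=0$ should contract because the contractible space of self-adjoints surjects onto $U(M)$ via $\exp$. To make that precise, diagonalize an endpoint mismatch via spectral projections and deform to $e^{2\pi i tS}$ with $S$ integer-valued. Then one must show that simple loops with equal $\tau(S)$ are homotopic. This uses comparison of projections in a factor: equal traces give unitarily equivalent spectral projections, and $U(M)$ is connected, so the conjugating unitary can be pathed to $1$.

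Combining these steps gives injectivity and the description of $\pi_1(U(M))\cong\R$ in part (1).
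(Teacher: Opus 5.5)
Your opening sentence matches the paper, which gives no proof here and simply quotes the statement from \cite{ASS71}. Your Steps 1--3 are also a sound sketch of the invariant and of surjectivity, provided $\delta\le 1$ so that the path $ue^{s\log v}$ stays away from $-1$.

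As a reconstruction, however, Step 4 has a genuine gap. The missing part is exactly the first assertion of part (1), Theorem 2.8 of \cite{ASS71}, and both your injectivity argument and the ``complete invariant'' clause of part (2) depend on it. Nothing in the proposal shows that an arbitrary loop is homotopic to a concatenation of simple loops.

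The ``fibration picture'' cannot be made precise, because $\exp\colon M_{sa}\to U(M)$ has no path-lifting property. A surjection from a contractible space says nothing about $\pi_1$ by itself; the same holds for $U(n)$, whose $\pi_1$ is $\Z$. A concrete failure:
\begin{itemize}
\item In a unital copy of $M_2(\mathbb{C})$ in $M$, let $K=2\pi e_{11}$, so $e^{iK}=1$.
\item Let $v_s=e^{is(2q-1)}$, where $q$ is the projection onto $(e_1+e_2)/\sqrt2$.
\item For $0<s<\pi$, every logarithm $L$ of $v_s$ commutes with $v_s$, hence with its spectral projection $q$.
\item Therefore $\|L-K\|\ge\|[q,K]\|=\pi$, even though $v_s\to 1$.
\end{itemize}
So this path starting at $e^{iK}$ has no continuous lift starting at $K$.

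Worse, a unitary in a II$_1$ factor (for example a Haar unitary) can have spectrum all of $\mathbb{T}$. Then every branch cut meets its spectrum, the corresponding logarithm is norm-discontinuous there, and logarithms of nearby unitaries need not be close. ``Diagonalize an endpoint mismatch'' therefore does not describe an actual construction.

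Some genuinely new input is needed. One workable target is to show that, after a homotopy, the logarithm can be chosen continuously along the loop except at finitely many jumps, each between two logarithms of the same unitary. Each jump then contributes simple loops, since both logarithms commute with the principal Borel logarithm of that unitary. The continuous pieces contract by convexity of $M_{sa}$. The proposal supplies none of this.

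Separately, your last reduction is misstated. Two integer-spectrum $S,S'$ with $\tau(S)=\tau(S')$ need not have unitarily equivalent spectral data: take $S=p$ and $S'=2q-(1-q)$ with $\tau(p)=\tau(q)=\frac12$. The correct route is as follows:
\begin{itemize}
\item Write $S=\sum_n np_n$ and use $e^{2\pi itS}=\prod_n(e^{2\pi itp_n})^n$ to get $[e^{2\pi itS}]=\sum_n n[e^{2\pi itp_n}]$.
\item Your conjugation argument shows that $[e^{2\pi itp}]$ depends only on $\tau(p)$.
\item This class is additive over orthogonal projections, so $\tau(p)\mapsto[e^{2\pi itp}]$ extends to a homomorphism $\Phi\colon\R\to\pi_1(U(M))$ with $\frac{1}{2\pi i}I_\delta\circ\Phi=\mathrm{id}$.
\end{itemize}
This $\Phi$ is onto exactly when the missing Step 4 holds, and only then do you get the isomorphism.
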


Let us now assume, for some $2 \leq n, m \leq \infty$, $A = C_r^\ast(\F_n) \cong C_r^\ast(\F_m)$. Immediately, per Corollary \ref{col: specific homotopy equiv}, we have $U(M)^n$ and $U(M)^m$ have the same fundamental group. Since $\pi_1(U(M)) = \R$, we have $\pi_1(U(M)^n) = \R^n$ and $\pi_1(U(M)^m) = \R^m$. However, $\R^n \cong \R^m$ as additive groups for all $m, n$, so this does not immediately yields $m = n$. But, we have the following:

\begin{thm}
    Assume $n < \infty$. Then the isomorphism $\R^n \cong \R^m$ yielded from the above consideration is continuous w.r.t. the product topologies.
\end{thm}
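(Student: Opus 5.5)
The plan is to reduce continuity to an estimate on each coordinate axis, and then bound the $\pi_1$-invariant from Theorem \ref{thm: description of fund gp} by a trace computation along an explicit smooth loop.

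\textbf{Reduction to small loops on the axes.} The isomorphism $\Phi:\R^n\to\R^m$ is a composite of three maps. First, $\pi_1(U(M)^n)\cong\pi_1(\Emb(C_r^\ast(\F_n),M))$ by Corollary \ref{col: specific homotopy equiv}. Second, the homeomorphism $\Emb(C_r^\ast(\F_n),M)\to\Emb(C_r^\ast(\F_m),M)$, $\pi\mapsto\pi\circ\alpha$, induced by the isomorphism $\alpha:C_r^\ast(\F_m)\to C_r^\ast(\F_n)$. Third, the inclusion into $U(M)^m$, again via Corollary \ref{col: specific homotopy equiv}. Write $h_j=\alpha(g'_j)\in U(C_r^\ast(\F_n))$, where the $g'_j$ are the generators of $\F_m$.

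Since $n<\infty$, $\Phi$ is continuous as soon as each restriction $\Phi|_{\R e_k}$ is continuous. Since the target carries the product topology, it suffices that each coordinate $\Phi_j|_{\R e_k}:\R\to\R$ is continuous. This is an additive map $\R\to\R$, so it is continuous (hence linear) as soon as it is bounded on $[0,1]$. I will therefore aim for an estimate of the form
\[
|\Phi_j(s e_k)|\le C_{j}\, s\qquad (s\in[0,1]).
\]
Since $\pi_1$ of a topological group does not depend on the basepoint and free homotopy classes coincide with based ones, I may choose convenient basepoints throughout.

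\textbf{Representing loops.} Fix free Haar unitaries $u_1,\dots,u_n$, so that $\vec u\in\Emb$. For $s\in[0,1]$ pick a projection $P_s$ with $\tau(P_s)=s$. By Theorem \ref{thm: description of fund gp}(1), the loop
\[
\gamma_s(t)=(u_1,\dots,u_ke^{2\pi itP_s},\dots,u_n)
\]
represents $se_k$. Apply Lemma \ref{lem: all unitaries to free} with $K=\Ran(\gamma_s)$ and put $x(t)=\varphi(\gamma_s(t),1)$. Each coordinate is $x_l(t)=a_l\gamma_{s,l}(t)b_l$ with fixed unitaries $a_l,b_l$, and $x$ is a loop in $\Emb$ homotopic to $\gamma_s$ in $U(M)^n$; the basepoint moves along a path in $\Emb$ by item (3) of that lemma. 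The $j$-th coordinate of the image loop is $w_j(t)=\pi_t(h_j)$, where $\pi_t$ is the embedding with $\pi_t(g_l)=x_l(t)$.

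Choose a noncommutative polynomial $q_j$ in the $g_l^{\pm1}$ with $\|q_j-h_j\|<\varepsilon\ll\delta$, where $\delta$ is as in Theorem \ref{thm: description of fund gp}(2). Then $Q_j(t)=q_j(x(t))$ is a smooth loop in $GL(M)$ with $\|Q_j(t)-w_j(t)\|<\varepsilon$ uniformly in $t$ and $\|Q_j(t)^{-1}\|\le (1-\varepsilon)^{-1}$.

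\textbf{Computing the invariant.} I would show that the invariant of $w_j$ equals $\frac1{2\pi i}\int_0^1\tau(Q_j'(t)Q_j(t)^{-1})\,dt$. This is the de la Harpe--Skandalis determinant formula. To obtain it from Theorem \ref{thm: description of fund gp}(2):
\begin{itemize}
\item[(a)] $w_j$ and the unitary part of $Q_j$ are uniformly $2\varepsilon$-close. Multiplying pointwise by a small logarithm gives a homotopy between them, and comparing $I_\delta$ on a common fine division shows the invariants agree.
\item[(b)] For a smooth loop in $GL(M)$ the sums $\sum_i\tau\log(Q(t_{i-1})^{-1}Q(t_i))$ are constant under refinement of the division once the mesh is small. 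As the mesh tends to $0$ they converge to $\int\tau(Q'Q^{-1})$. Replacing $Q$ by its polar unitary part changes nothing, because the positive part contributes a real exact term that integrates to $0$ around the loop.
\end{itemize}

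With this formula in hand, $Q_j'(t)$ is a finite sum over the occurrences of $x_k^{\pm1}$ in $q_j$. Each term has the form $c\,A(t)\,P_s\,B(t)$, with $A(t),B(t)$ products of unitaries and $|c|$ bounded by the coefficients of $q_j$. Using $|\tau(AP_sBQ^{-1})|\le\|A\|\|B\|\|Q^{-1}\|\tau(P_s)$, I get
\[
|\Phi_j(se_k)|\le \frac{N(q_j)}{1-\varepsilon}\,s,
\]
where $N(q_j)$ counts occurrences weighted by coefficients. This constant depends only on $q_j$ and is independent of $s$, as required.

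\textbf{Main obstacle.} The hard part is step (b): justifying rigorously that the Araki--Smith--Smith invariant $\frac1{2\pi i}I_\delta$ of a loop in $U(M)$ agrees with the integral formula for a nearby smooth non-unitary loop in $GL(M)$. The natural route is to pass to the polar part, which stays smooth since $|Q|^{-1}$ is a smooth function of $Q^\ast Q$. I would then verify that for fine divisions $\tau\log(f(t_{i-1})^\ast f(t_i))$ agrees with $\int_{t_{i-1}}^{t_i}\tau(f'f^\ast)$ up to $o(t_i-t_{i-1})$, using the power series of $\log$ near $1$.
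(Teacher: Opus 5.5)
Your proposal is correct, and it reaches the statement by a genuinely different route from the paper's.

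\textbf{The paper's route.} The paper builds all the loops at once. It uses SOT-continuous families of projections $p^k_s$, free across $k$, together with free Haar unitaries $x_k$ free from them. The loops $f_{\vec s}(t)=(x_ke^{2\pi itp^k_{s_k}})_k$ then already lie in $\Emb(A,M)$ and depend SOT-continuously on $\vec s\in[0,1]^n$. Uniform norm-continuity of the identification map gives a single division of $[0,1]$ valid for every $\vec s$. So the $j$-th invariant is one fixed finite sum of terms $\tau(\log(\cdots))$, each SOT-continuous in $\vec s$ by Remark \ref{rmk: SOT embedding}. Continuity at $0$ plus additivity then finishes the argument.

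\textbf{Your route.} You work one axis at a time and never need $P_s$ to vary continuously in $s$. You push an arbitrary representative into $\Emb(A,M)$ using Lemma \ref{lem: all unitaries to free}. Instead of a continuity argument, you prove a quantitative estimate. Through the de la Harpe--Skandalis formula, every term of $Q_j'$ contains $P_s$ exactly once, and $|\tau(XP_s)|\le\|X\|\,\tau(P_s)$. This gives $|\Phi_j(se_k)|\le N(q_j)\,s/(1-\varepsilon)$.

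\textbf{Comparison.} Both proofs rest on approximating $\alpha(g'_j)$ by a polynomial $q_j$; in the paper this step is hidden inside the uniform continuity of the identification map. The paper's route is softer: it involves no differentiation and never identifies $I_\delta$ with an integral. In exchange, it needs SOT-continuous free families of projections and the SOT form of Lemma \ref{lem: op norm embedding}. Your route avoids the strong operator topology entirely and gives an explicit bound on the entries of the linear map $\Phi$. The price is your step (b).

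\textbf{Step (b).} The sketch you give does work.
\begin{enumerate}
\item The polar part $Q_j|Q_j|^{-1}$ is smooth.
\item For a smooth unitary loop $f$, uniformly in $i$,
$\tau\log(f(t_{i-1})^\ast f(t_i))=\int_{t_{i-1}}^{t_i}\tau(f^\ast f')\,dt+o(t_i-t_{i-1})$.
Since $I_\delta$ does not depend on the division, you may let the mesh tend to $0$.
\item By traciality, $\tau(|Q|'|Q|^{-1})=\frac{d}{dt}\tau(\log|Q|)$, which is real and integrates to $0$ around the loop.
\end{enumerate}
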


\begin{proof}
    Per the conditions on $M$, we may choose projections $p^k_s \in M$ for each $1 \leq k \leq n$ and $s \in [0, 1]$ s.t. $\tau(p^k_s) = s$ for all $k$ and $s$; the map $[0, 1] \ni s \mapsto p^k_s \in M$ is SOT-continuous for all $k$; and $p^k_s$ is freely independent from $p^{k'}_{s'}$ for all $k$, $k'$, $s$, and $s'$, whenever $k \neq k'$. We may further choose freely independent Haar unitaries $x_j$ for $1 \leq j \leq n$ that are also freely independent from all $p^k_s$. Consider the map $f: [0, 1] \times [0, 1]^n \to U(M)^n$ defined by
    \begin{equation*}
        f(t, (s_k)_{1 \leq k \leq n}) = (x_ke^{2\pi itp^k_{s_k}})_{1 \leq k \leq n}.
    \end{equation*}

    It is easy to see that $f$ is SOT-continuous. Moreover, write $f_{\vec{s}}$ for $f(\cdot, \vec{s})$. Then $f_{\vec{s}}$ are all continuous loops in the operator-norm topology. We also note that the range of $f$ is in $\Emb(A, M)$. Per part (1) of Theorem \ref{thm: description of fund gp} and Corollary \ref{col: specific homotopy equiv}, $[f_{\vec{s}}] = \vec{s} \in \R^n = \pi_1(\Emb(A, M))$.

    Now, let $\iota$ be the map that identifies $\Emb(A, M)$ regarded as a subset of $U(M)^n$, and $\Emb(A, M)$ regarded as a subset of $U(M)^m$. Note that $\iota$ is a homeomorphism w.r.t. the operator-norm topology and, per Remark \ref{rmk: SOT embedding}, SOT. Hence, $\iota \circ f: [0, 1] \times [0, 1]^n \to U(M)^m$ is SOT-continuous.
    
    In addition, fix $1 \leq j \leq m$ and let $\delta$ be the universal constant given by part (2) of Theorem \ref{thm: description of fund gp}. Examining the definition of $\iota$, it is easy to see that it is uniformly continuous in the sense that there exists $\varepsilon > 0$ s.t. whenever $\vec{x}, \vec{y} \in \Emb(A, M)$ regarded as a subset of $U(M)^n$, and $\|x_k - y_k\|_\infty < \varepsilon$ for all $1 \leq k \leq n$, we have $\|\iota(\vec{x})_j - \iota(\vec{y})_j\|_\infty < \delta$. Note that
    \begin{equation*}
        \|x_ke^{2\pi itp^k_{s_k}} - x_ke^{2\pi it'p^k_{s_k}}\|_\infty \leq 2\pi|t - t'|
    \end{equation*}
    for all $t$, $t'$, $\vec{s}$, and $k$. Thus, there is a division $0 = t_0 < t_1 < \cdots < t_d = 1$, independent of $\vec{s} \in [0, 1]^n$, s.t.
    \begin{equation*}
        \|[\iota \circ f_{\vec{s}}(t)]_j - [\iota \circ f_{\vec{s}}(t')]_j\|_\infty < \delta
    \end{equation*}
    whenever $t, t' \in [t_{\eta - 1}, t_\eta]$ for some $1 \leq \eta \leq d$. This implies, for any $\vec{s} \in [0, 1]^n$,
    \begin{equation*}
        I_\delta([\iota \circ f_{\vec{s}}]_j) = \sum_{\eta = 1}^d \tau(\log([\iota \circ f_{\vec{s}}(t_{\eta - 1})]_j^\ast [\iota \circ f_{\vec{s}}(t_\eta)]_j)).
    \end{equation*}
    
    But then $\vec{s} \mapsto I_\delta([\iota \circ f_{\vec{s}}]_j)$ is continuous. By part (2) of Theorem \ref{thm: description of fund gp}, this implies the isomorphism $\R^n \cong \R^m$ is continuous w.r.t. the product topologies when restricted to $[0, 1]^n$, so in particular continuous at 0. Since the isomorphism is as additive groups, it must be continuous everywhere, as desired.
\end{proof}

Since continuous additive maps $\R^n \to \R^m$ must be $\R$-linear and $\R^n \not\cong \R^m$ as $\R$-linear spaces whenever $n \neq m$, we immediately obtain:

\begin{col}
    If $C_r^\ast(\F_n) \cong C_r^\ast(\F_m)$ for $2 \leq n, m \leq \infty$, then $n = m$.
\end{col}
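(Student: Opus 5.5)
We argue by cases on whether $n$ and $m$ are finite. The idea is to combine Corollary \ref{col: specific homotopy equiv}, Theorem \ref{thm: description of fund gp} and the preceding theorem to obtain a \emph{continuous} additive bijection between $\R^n$ and $\R^m$, and then compare dimensions. If $n = m = \infty$ there is nothing to prove. So by symmetry of the hypothesis $C_r^\ast(\F_n) \cong C_r^\ast(\F_m)$ we may assume $n < \infty$, with $m$ either finite or $\infty$.

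\textbf{The group isomorphism.} Fix an isomorphism $\alpha: C_r^\ast(\F_m) \to C_r^\ast(\F_n)$ and put $A = C_r^\ast(\F_n)$. Composing with $\alpha$ lets us view $\Emb(A, M)$ both inside $U(M)^n$, via the generators $g_k$, and inside $U(M)^m$, via the images of the generators of $\F_m$. The identification map $\iota$ between these two copies is a homeomorphism by Lemma \ref{lem: op norm embedding}. By Corollary \ref{col: specific homotopy equiv}, each inclusion into $U(M)^n$, respectively $U(M)^m$, is a weak homotopy equivalence. Hence we get a group isomorphism
\[
\Phi: \R^n = \pi_1(U(M)^n) \cong \pi_1(\Emb(A,M)) \xrightarrow{\ \iota_\ast\ } \pi_1(\Emb(A,M)) \cong \pi_1(U(M)^m) = \R^m .
\]
Here $\pi_1(U(M)^m) = \R^m$ is the product of the groups $\pi_1(U(M)) = \R$ from Theorem \ref{thm: description of fund gp}; when $m = \infty$ this is $\R^{\mathbb{N}}$, since $\pi_1$ commutes with products. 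By the theorem just proved, $\Phi$ is continuous for the product topologies. That proof computes each coordinate $j$ separately via $I_\delta$, so it applies verbatim when $m = \infty$.

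\textbf{Linearity.} Additivity gives $\Phi(qv) = q\Phi(v)$ for all $q \in \mathbb{Q}$. For $r \in \R$, take rationals $q_i \to r$; then $q_i v \to rv$ in $\R^n$, so continuity of $\Phi$ gives $\Phi(rv) = \lim \Phi(q_i v) = \lim q_i \Phi(v) = r\Phi(v)$. Note that scalar multiplication is continuous in the product topology on $\R^m$ even for $m = \infty$. Thus $\Phi$ is an $\R$-linear bijection.

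\textbf{Conclusion.} The image $\Phi(\R^n)$ is spanned by the $n$ vectors $\Phi(e_1), \dots, \Phi(e_n)$, so $\dim_\R \R^m \le n$. This rules out $m = \infty$, because $\R^{\mathbb{N}}$ is infinite-dimensional. For finite $m$, a linear bijection forces $m = n$.

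The only point needing care is the case $m = \infty$: we must check that the preceding theorem's continuity conclusion still holds when the target is $\R^{\mathbb{N}}$ with the product topology. As noted above, continuity there is checked one coordinate at a time, so the argument goes through.
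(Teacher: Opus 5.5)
Your proposal is correct and follows the paper's own argument. You reduce to $n<\infty$, use the preceding theorem to get a continuous additive bijection $\R^n\to\R^m$, upgrade it to $\R$-linearity via density of $\mathbb{Q}$, and conclude by comparing dimensions; you just spell out steps the paper leaves implicit, namely the rational-to-real scaling argument and the exclusion of $m=\infty$ by finite spanning.
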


\bibliographystyle{amsalpha}
\bibliography{bibliography}

\end{document}